\newtheorem{lemma}{Lemma}[section]
\newtheorem{theorem}[lemma]{Theorem}
\newtheorem{theorema}{Theorem}
\theoremstyle{definition}
\newtheorem{example}[lemma]{Example}
\DeclareMathOperator{\Aut}{Aut}
\DeclareMathOperator{\cayl}{Cay}
\DeclareMathOperator{\col}{col}
\DeclareMathOperator{\id}{id}
\DeclareMathOperator{\lab}{lab}
\DeclareMathOperator{\Stab}{Stab}
\DeclareMathOperator{\Sym}{Sym}
\DeclarePairedDelimiter{\abs}{\lvert}{\rvert}
\newcommand*{\presentation}[2]{\langle#1\mid#2\rangle}
\newcommand*{\setst}[2]{\{#1 \mid #2\}}
\newcommand*{\unCayley}[2]{\cayl(#1,#2)}
\newcommand*{\colCayley}[2]{\cayl_{\col}(#1,#2)}
\newcommand*{\labCayley}[2]{\cayl_{\lab}(#1,#2)}
\newcommand*{\Z}{\mathbf{Z}}
\title{Most rigid representation and Cayley index of finitely generated groups}
\author{Paul-Henry Leemann\thanks{Supported by grant 200021\textunderscore188578 of the Swiss National Fund for Scientific Research.}, Mikael de la Salle\thanks{Supported by ANR grants AGIRA ANR-16-CE40-0022 and Noncommutative analysis on groups and quantum groups ANR-19-CE40-0002-01.}}
\date{\today}
\begin{document}
\maketitle
%
%
%
%
%
%
%
%
%
%
\begin{abstract}
  If $G$ is a group and $S$ a generating set, $G$ canonically embeds into the automorphism group of its Cayley graph and it is natural to try to minimize, over all generating sets, the index of this inclusion. This infimum is called the Cayley index of the group. In a recent series of works, we have characterized the infinite finitely generated groups with Cayley index $1$. We complement this characterization by showing that the Cayley index is $2$ in the remaining cases and is attained for a finite generating set.
\end{abstract}
\section{Introduction}
Given a group $G$ and a symmetric generating set $S \subset G\setminus\{1\}$, the Cayley graph $\unCayley{G}{S}$ is the simple unoriented graph with vertex set $G$ and an edge between $g$ and $h$ precisely when $g^{-1}h \in S$.
By construction, the action by left-multiplication of $G$ on itself induces an action of the group on its Cayley graph, which is free and vertex-transitive.
One can then define the \emph{Cayley index} of $G$ as the infimum over all symmetric generating sets of $[\Aut(\unCayley{G}{S}):G]$.
Cayley graphs of $G$ with $[\Aut(\unCayley{G}{S}):G]$ minimal are known under the name of \emph{most rigid representations}; if the index is $1$ they are called \emph{graphical rigid representation} (usually written GRR).

Let us now quickly recall that a generalized dicyclic group is a non-abelian group with an abelian subgroup $A$ of index $2$ and an element $x$ not in $A$ such that $x^4=1$ and $xax^{-1} =a^{-1}$ for every $a\in A$.

Thanks to combined efforts of, notably, Imrich, Watkins, Nowitz, Hetzel and Godsil, see \cite{MR642043} and the references therein, a complete classification of finite groups admitting a GRR was obtained in the 70's. More precisely, there is list of $13$ exceptional groups (of order at most $32$) such that a finite group has Cayley index $1$ if and only if it is neither an abelian group with an element of order at least $3$, nor a generalized dicyclic group, nor one of the $13$ exceptional groups.
The computation of the Cayley index of the remaining finite groups has been initiated Imrich and Watkins \cite{MR0457275} and recently completed by Morris and Tymburski \cite{MR3995536}.

On the other hand, until recently very little was known for infinite finitely generated groups. In \cite{LdlS2020,LdlS2021}, confirming a conjecture of Watkins, we showed that an infinite finitely generated group has Cayley index $1$ if and only if it is neither an abelian group, nor a generalized dicyclic group; in which case the Cayley index is attained for a finite generating set. This completed the classification of finitely generated groups admitting a GRR. The aim of this to compute the Cayley index of all finitely generated groups.
Our main result, which completes the classification given in \cite{MR3995536} for finite groups, is
\begin{theorema}\label{Thm:Main}
Let $G$ be an infinite finitely generated group.
Then its Cayley index is equal to $2$ if $G$ is abelian or generalized dicyclic and $1$ otherwise.
Moreover, this index is attained for some finite generating set $S$.
\end{theorema}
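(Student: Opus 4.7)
The lower bound $[\Aut(\unCayley{G}{S}):G]\geq 2$ for every symmetric generating set $S$ of an abelian or generalized dicyclic group $G$ is already the content of \cite{LdlS2020,LdlS2021} (equivalently, of the well-known fact that such groups admit no \GRR): in both cases the involution $\sigma$ inverting every element of the abelian subgroup extends to a non-trivial graph automorphism fixing the identity, hence lies outside the regular $G$-action. So the task reduces to exhibiting, for every infinite finitely generated abelian or generalized dicyclic $G$, a finite symmetric generating set $S$ such that $\Aut(\unCayley{G}{S})=G\rtimes\langle\sigma\rangle$.

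My plan is to follow the two-step scheme already used to prove the $\GRR$ theorem in \cite{LdlS2020,LdlS2021}, upgrading it to the present setting. In the first step I would work with the colored Cayley graph $\colCayley{G}{S}$, where edges are colored by the pair $\{s,s^{-1}\}$ that labels them, and I would look for a finite $S$ such that
\[
\Aut(\colCayley{G}{S})\;=\;G\rtimes\langle\sigma\rangle.
\]
Because any color-preserving automorphism fixing $1$ is a group automorphism stabilizing $S$ setwise, the question amounts to finding $S$ whose stabilizer in $\Aut(G)$ is exactly $\langle\sigma\rangle$. In the abelian case, decomposing $G\cong\Z^n\oplus T$ with $T$ finite, it is natural to pick $S$ as a union of a $\Z$-basis of $\Z^n$ (with enough extra elements to break all ``extra'' linear symmetries) together with generators of $T$ chosen so that inversion is their only setwise stabilizer; the generalized dicyclic case is analogous after restricting to the index-$2$ abelian subgroup $A$ and adding one element $x\notin A$.

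The second, more delicate, step is to arrange that the colors of $\colCayley{G}{S}$ are already encoded in the pure graph $\unCayley{G}{S}$. For this I would use the triangle counts $\Triangles{g}{h}$ (number of common neighbors of $g$ and $h$) and, if necessary, higher-order local invariants, together with the freedom to enlarge $S$: by adjoining to $S$ carefully chosen elements $s_1s_2$, $s_1^2$, etc., one can force the multiset $\{\Triangles{1}{s}:s\in S\}$ to separate generators into orbits under $\langle\sigma\rangle$, so that every graph automorphism fixing $1$ already preserves the coloring. Once this is achieved the two automorphism groups coincide, and we are done.

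The principal obstacle is the uniformity of the second step. In the abelian case one has to handle simultaneously groups as diverse as $\Z^n$, $\Z\oplus(\Z/m)$ and $\Z^n\oplus T$ for arbitrary finite abelian $T$, and the triangle-count computation in $\colCayley{G}{S}$ is sensitive both to low-order relations among generators and to the torsion. I expect the argument to split into a few cases according to whether $G$ has large free rank, whether $2$-torsion is present, and, in the generalized dicyclic case, according to the order of $x$; in each case one must verify by direct counting that an explicit finite $S$ separates the colors while keeping the only extra symmetry equal to $\sigma$.
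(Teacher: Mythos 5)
Your overall two-step architecture---first pin down the colour-preserving group $\Aut(\colCayley{G}{S})$ for a finite $S$, then enlarge $S$ so that the colours are recoverable from the bare graph---is exactly the paper's, and your lower bound is fine. But your first step rests on a false premise: it is \emph{not} true that a colour-preserving automorphism fixing $1$ is a group automorphism stabilizing $S$ setwise. The defining condition $\varphi(gs)\in\{\varphi(g)s,\varphi(g)s^{-1}\}$ allows the sign to depend on $g$, and such maps are in general neither automorphisms nor anti-automorphisms. Concretely, for $Q_8$ the stabilizer $\xi_{Q_8}\cong(\Z/2\Z)^3$ contains the map sending $i\mapsto -i$, $j\mapsto j$, $k\mapsto k$, which is not a homomorphism (it sends $ij=k$ to $k$ but $(-i)j=-k$); and for the generalized dicyclic group $H_3=\presentation{s_1,s_2,s_3}{s_is_js_i^{-1}=s_j^{-1}}$ with generating set $S_3^{\leq 2}$, the inversion map $g\mapsto g^{-1}$ is colour-preserving and fixes $1$ although $H_3$ is non-abelian (Example~\ref{Ex:3}). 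So choosing $S$ whose setwise stabilizer in $\Aut(G)$ is $\langle\sigma\rangle$ only controls the automorphism part of $\xi_S$ and lets these extra colour-preserving bijections through. Ruling them out is precisely the hard content of the theorem: the paper does it by computing $\Xi_G=\Aut(\colCayley{G}{G})$ for all groups (Theorem~\ref{Thm:Main2}, which leans on \cite[Theorem 7]{LdlS2020}) and then descending to a finite generating set either by compactness (Lemma~\ref{Lemma:Compacity}) or explicitly with $S^{\leq 3}$ (Theorem~\ref{Thm:Quantitative}); none of that is replaced by your explicit choice of generators.

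Two further remarks. Your second step is, in outline, the content of \cite[Theorem 10]{LdlS2020} and \cite[Proposition 2.2]{LdlS2021}, which the paper simply invokes; as written it is a programme rather than a proof, and you acknowledge the case analysis is not carried out. Also, for generalized dicyclic $G$ the relevant involution is not ``inversion on the abelian subgroup'': it is the map $\psi$ equal to the identity on $A$ and to inversion on $xA$. Inversion on $A$ is realized by conjugation by $x$, which need not preserve a general generating set, so your description of the lower-bound witness should be corrected accordingly.
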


The difficult cases of the Theorem~\ref{Thm:Main} are contained in \cite{LdlS2020,LdlS2021}. The main contribution of this short note is to study separately the abelian and generalized dicyclic groups; this study also relies on the results from \cite{LdlS2020}.

In order to prove Theorem~\ref{Thm:Main}, we will consider the following three subgroups of automorphisms of $\unCayley{G}{S}$:
\begin{itemize}
\item
The full group $\Aut(\unCayley{G}{S})$ which consists of permutations $\varphi$ of $G$ satisfying $\varphi(g S) = \varphi(g)S$ for every $g \in G$.
 \item The group $\Aut(\colCayley{G}{S})$ of \emph{colour-preserving automorphisms}, defined as the permutations $\varphi$ satisfying $\varphi(g s) \in \{\varphi(g)s,\varphi(g) s^{-1}\}$ for every $g \in G$ and $s \in S^\pm$.
\item The group $\Aut(\labCayley{G}{S})$ of \emph{labelled-preserving automorphisms}, defined as the permutations of $G$ satisfying $\varphi(g s) = \varphi(g) s$ for every $g \in G$, $s \in S^\pm$. This group is isomorphic to $G$ acting by left-translation because $S$ is generating.
\end{itemize}
In \cite{MR3027684}, Byrne, Donner and Sibley computed the group $\Xi_G\coloneqq\Aut(\colCayley{G}{G})$ for $G$ finite.
We will generalize their result to all groups, not necessarily finitely generated.
Before stating it, we introduce some notation.
On the one hand, if $G$ is abelian we denote by $\eta$ the inverse map $g\mapsto g^{-1}$ on $G$. Since $G$ is abelian, $G\rtimes\{\id,\eta\}$ is a subgroup of $\Aut(\colCayley{G}{S})$ for any $S$.
On the other hand, if $G=\langle A,x\rangle$ is a generalized dicyclic group, we denote by $\psi$ the permutation that is the identity on $A$ and the inverse on $xA$.
Once again, a simple verification shows that $G\rtimes\{\id,\psi\}$ is a subgroup of $\Aut(\colCayley{G}{S})$ for any $S$.
Finally, let $Q_8=\{\pm1,\pm i,\pm j,\pm k\}$ be the quaternion group.
For any choice of $(\varepsilon_i,\varepsilon_j,\varepsilon_k)\in\{-1,1\}^3$, we define a permutation of $\varphi_{(\varepsilon_i,\varepsilon_j,\varepsilon_k)}$ by $\varphi_{(\varepsilon_i,\varepsilon_j,\varepsilon_k)}(\pm x)=\pm x^{\varepsilon_x}$ for $x\in \{i,j,k\}$ and $\varphi_{(\varepsilon_i,\varepsilon_j,\varepsilon_k)}(\pm1)=\pm1$.
One easily checks that for any generating set $S$ of $Q_8$,
\[\Stab_{\Aut(\colCayley{Q_8}{S})}(1_{Q_8})=\setst{\varphi_{(\varepsilon_i,\varepsilon_j,\varepsilon_k)}}{(\varepsilon_i,\varepsilon_j,\varepsilon_k)\in\{-1,1\}^3}\cong(\Z/2\Z)^3\] and that $\Aut(\colCayley{Q_8}{S})=Q_8\cdot \Stab_{\Aut(\colCayley{Q_8}{S})}(1_{Q_8})\cong \Z/2\Z\wr(\Z/2\Z)^2$, see~\cite{MR3027684} for more details.
With these notations, we have the following generalization of the classification theorem of \cite{MR3027684}.
\begin{theorema}\label{Thm:Main2}
Let $G$ be a group and let $\Xi_G=\Aut(\colCayley{G}{G})$.
\begin{enumerate}
\item If $G=(\Z/2\Z)^{(I)}$, then $\Xi_G=G$,
\item If $G$ is any other abelian group, then $\Xi_G=G\rtimes\{\id,\eta\}$,
\item If $G=Q_8\times(\Z/2\Z)^{(I)}$, then $\Xi_G=\Xi_{Q_8}\times(\Z/2\Z)^{(I)}$,
\item If $G$ is any other generalized dicyclic group then $\Xi_G=G\rtimes\{\id,\psi\}$,
\item If $G$ is neither abelian nor generalized dicyclic, then $\Xi_G=G$.
\end{enumerate}
\end{theorema}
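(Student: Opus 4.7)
\emph{Plan.} The first step is a reduction to the stabilizer of the identity. Left-translations $L_g(h) = gh$ preserve colours in $\colCayley{G}{G}$, so $G \hookrightarrow \Xi_G$ and $\Xi_G = G \cdot H$ where $H := \Stab_{\Xi_G}(1)$. Hence it suffices to describe $H$. The ``easy direction'' --- that the automorphisms listed in each case actually do lie in $\Xi_G$ --- is a routine check from the definitions; the matching upper bound is the content of the argument.

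The structural form of elements of $H$ comes from taking $g = 1$ in the colour-preserving condition: for $\varphi \in H$, $\varphi(s) \in \{s, s^{-1}\}$ for every $s \in G$, so $\varphi(g) = g^{\epsilon(g)}$ for some $\epsilon \colon G \to \{\pm 1\}$ (with freedom on involutions). Re-inserting this form into the full condition $\varphi(gh) \in \{\varphi(g)h, \varphi(g)h^{-1}\}$ and doing the four-way case analysis on the signs yields, for every pair $g, h \in G$, a trichotomy: either (i) $g$ and $h$ commute, or (ii) $g^{-1}hg = h^{-1}$ (conjugation by $g$ inverts $h$), or (iii) $h^2 = g^{\pm 2}$ (a torsion-type identity). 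A parallel case analysis gives compatibility relations between $\epsilon(gh)$ and $\epsilon(g), \epsilon(h)$.

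If $\epsilon \equiv 1$, then $\varphi = \id$. Otherwise, pick $x$ with $\epsilon(x) = -1$ and $x^2 \ne 1$. The trichotomy applied to $(x, h)$ shows that every $h \in G$ centralizes $x$, is inverted by $x$-conjugation, or satisfies $h^2 = x^{-2}$. Pursuing this produces a global dichotomy: either $x$ is central and commutes with everything, forcing $G$ to be abelian (and $\varphi = \eta$); or conjugation by $x$ inverts a full index-$2$ subgroup $A$, forcing $G$ to be generalized dicyclic (and $\varphi = \psi$). The truly exceptional phenomenon is $Q_8 \times (\Z/2\Z)^{(I)}$: there the torsion option (iii) is available for three independent choices of $x \in \{i, j, k\}$, and the independence of these choices produces the extra $(\Z/2\Z)^3$ of sign-flip automorphisms $\varphi_{(\varepsilon_i, \varepsilon_j, \varepsilon_k)}$.

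The main obstacle lies in passing from the local trichotomy to the global structure without appealing to finiteness. Concretely, one must show that if $\epsilon$ takes value $-1$ on some non-involution, then it does so on a well-defined coset and the abelian/dicyclic dichotomy is globally forced; and one must isolate the $Q_8 \times (\Z/2\Z)^{(I)}$ case by verifying that option (iii) can coexist with a non-trivial $2$-torsion direct factor only when the non-abelian piece is exactly $Q_8$. The algebraic (rather than combinatorial) nature of the trichotomy is what makes this extension from the finite case of \cite{MR3027684} possible.
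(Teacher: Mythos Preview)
Your proposal is a plan rather than a proof, and you flag the gap yourself: ``The main obstacle lies in passing from the local trichotomy to the global structure.'' That passage is the entire content of the theorem, and you have not supplied it. Moreover, the trichotomy as you state it is imprecise: the constraint extracted from $(gh)^{\epsilon(gh)} \in g^{\epsilon(g)}\{h,h^{-1}\}$ depends on the sign pattern $(\epsilon(g),\epsilon(gh))$, and the four cases yield conditions such as $g^2=1$, $g^2=h^{-2}$, $(gh)^2=1$, $ghg^{-1}=h^{-1}$, $hgh^{-1}=g^{-1}$, $gh=hg$ --- not a uniform three-way alternative valid for every pair. Organizing these into a global dichotomy (and verifying that no step uses finiteness) is precisely the work you have deferred.

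The paper takes a different and shorter route. Cases~1 and~5 are delegated to \cite[Theorem~7]{LdlS2020}, which already characterizes when $\lvert\xi_G\rvert=1$. For the remaining cases the paper never analyses the sign function globally; instead it fixes an \emph{anchor element}, normalizes $\varphi$ to fix it, and shows $\varphi=\id$ by a short direct computation. In the abelian case the anchor is any $g_0$ of order $\geq 3$: once $\varphi(g_0)=g_0$, computing $\varphi(g_0h)$ two ways forces $\varphi=\id$ in two lines. Case~3 is Lemma~\ref{Lemma:Boolean}. In the dicyclic case the anchor is $x$: item~2 gives $\varphi|_A\in\{\id,\eta\}$, Lemma~\ref{lem:gendicyclic_not_Q8} supplies $a_0\in A$ with $a_0^2\notin\{1,x^2\}$ to rule out $\eta$, and the coset $xA$ is handled by the shift $\tilde\varphi(g)=x^{-1}\varphi(xg)$. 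This anchor-and-normalize strategy sidesteps the global sign-pattern bookkeeping entirely and is what you should compare your eventual argument against.
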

\paragraph{Acknowledgements} The question of determining the Cayley index of abelian and generalized dicyclic groups was asked to us by Adrien Le Boudec. We thank him for this question and for interesting discussions.
\section{The proofs}
For $G$ a group, recall that we denote by $\Xi_G$ the group $\Aut(\colCayley{G}{G})$. We additionally denote by $\xi_G$ the stabilizer of $1_G$ in $\Xi_G$.
Since $G$ acts transitively on the vertices of $\unCayley{G}{G}$ we have that $\Xi_G=G\cdot \xi_G$ and since the action of $G$ is free, $G\cap \xi_G=\{\id\}$.

Recall that a group $G$ is \emph{Boolean} if all its elements have order at most $2$. It is a well-known fact that this equivalent to $G$ being isomorphic to $(\Z/2\Z)^{(I)}$ for some $I$ (possibly empty).

We begin by generalizing \cite[Theorem 4]{MR3027684}.
\begin{lemma}\label{Lemma:Boolean}
Let $G$ be any group and $B$ be a Boolean group.
Then $\xi_{G\times B}\cong \xi_G$ and $\Xi_{G\times B}\cong \Xi_G\times B$.
\end{lemma}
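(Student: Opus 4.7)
The plan is to first show the isomorphism $\xi_{G\times B}\cong \xi_G$ by analyzing how a color-preserving automorphism of $\colCayley{G\times B}{G\times B}$ fixing the identity must interact with the product decomposition, and then to bootstrap the second statement via the decomposition $\Xi_H = H\cdot\xi_H$ valid for every group $H$.

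For the first part, I would fix $\varphi \in \xi_{G\times B}$ and exploit that every $b\in B$ is an involution. For the generator $(1_G,b)$, both choices in the color-preserving condition coincide because $(1_G,b)^{-1}=(1_G,b)$, so $\varphi((g,b')(1_G,b)) = \varphi(g,b')\cdot (1_G,b)$. Writing $\varphi(g,1_B) = (\alpha(g),\beta(g))$, this gives $\varphi(g,b) = (\alpha(g),\beta(g)b)$ for all $(g,b)\in G\times B$. Next, applied to the generators $(s,1_B)$ with $s\in G$, the color-preserving condition reads
\[(\alpha(gs),\beta(gs)b)\in\bigl\{(\alpha(g)s,\beta(g)b),\,(\alpha(g)s^{-1},\beta(g)b)\bigr\},\]
which forces $\beta(gs)=\beta(g)$ for every $s\in G$. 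Since $G$ generates itself and $\beta(1_G)=1_B$, we get $\beta\equiv 1_B$, while $\alpha(gs)\in\{\alpha(g)s,\alpha(g)s^{-1}\}$ shows $\alpha\in\xi_G$. A routine check confirms that conversely $\tilde\alpha(g,b):=(\alpha(g),b)$ lies in $\xi_{G\times B}$ for every $\alpha\in\xi_G$, giving the desired isomorphism $\xi_{G\times B}\cong\xi_G$, $\tilde\alpha\leftrightarrow\alpha$.

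For the second part, I would define $\Psi\colon \Xi_G\times B\to\Xi_{G\times B}$ by $\Psi(\Phi,b)(g',b')=(\Phi(g'),bb')$. The color-preserving condition for $\Psi(\Phi,b)$ reduces exactly to the color-preserving condition for $\Phi$ on $\colCayley{G}{G}$, so $\Psi$ is well defined, and a direct computation shows it is a homomorphism (multiplication in the second coordinate is just multiplication in $B$, and $B$ is abelian). Injectivity is immediate, and surjectivity follows from $\Xi_{G\times B}=(G\times B)\cdot\xi_{G\times B}$ combined with the first part: an arbitrary element factors as $\lambda_{(g,b)}\circ\tilde\alpha$, sending $(g',b')$ to $(g\alpha(g'),bb')$, which is $\Psi(\lambda_g\circ\alpha,b)$.

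The only mildly delicate point is making sure the product really is direct rather than semidirect; this works out because once $\beta\equiv 1_B$ is established, the $B$-factor is pointwise fixed by all of $\xi_{G\times B}$, so left translation by $(1_G,b)$ commutes with every element of $\xi_{G\times B}$. This is where the Boolean hypothesis is used in an essential way — without $b=b^{-1}$, the generators $(1_G,b)$ would allow a nontrivial involution on the $B$-factor and the conclusion would fail.
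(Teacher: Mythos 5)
Your proposal is correct and follows essentially the same route as the paper: both identify $\xi_{G\times B}$ as $\{(g,b)\mapsto(\alpha(g),b)\mid\alpha\in\xi_G\}$ by exploiting that the colour-preserving condition $\varphi(gh)\in\{\varphi(g)h,\varphi(g)h^{-1}\}$ degenerates to an equality when $h$ has order $2$, and then deduce $\Xi_{G\times B}\cong\Xi_G\times B$ from the decomposition $\Xi_H=H\cdot\xi_H$. Your write-up merely spells out the ``straightforward verification'' that the paper leaves implicit.
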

\begin{proof}
  A straigthforward verification shows that
  \[ \xi_{G \times B} = \{ (g,b) \mapsto (\varphi(g),b) \mid \varphi \in \xi_G\}.\]
Indeed, the inclusion $\subset$ is clear, and the converse is because any $\psi \in \xi_{G \times B}$ preserves $G$ and satisfies $\psi(gh) = \psi(g) h$ whenever $h$ has order $2$.
\end{proof}
\begin{lemma}\label{lem:gendicyclic_not_Q8} Let $G=\langle A,x\rangle$ be a generalized dicyclic group that is not isomorphic to $Q_8 \times B$. There is $a \in A$ such that $a^2 \notin\{1,x^2\}$.
\end{lemma}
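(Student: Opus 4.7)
My plan is to argue by contradiction: assume that for every $a \in A$ we have $a^2 \in \{1, x^2\}$, and deduce that $G\cong Q_8\times B$ for some Boolean group $B$, contradicting the hypothesis.

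Since $A$ is abelian, the squaring map $\sigma\colon A\to A$, $a\mapsto a^2$, is a homomorphism whose image is contained in $\{1,x^2\}$ by assumption. If $\sigma$ is trivial then $A$ is Boolean, so the defining relation $xax^{-1}=a^{-1}=a$ forces $G$ to be abelian, contradicting the hypothesis that $G$ is generalized dicyclic. Hence $\sigma(A)=\{1,x^2\}$; in particular $x^2\neq 1$, and $A[2]\coloneqq\ker\sigma$ has index $2$ in $A$. Next I would pick any $a_0\in A\setminus A[2]$, so that $a_0^2=x^2$. The relations $a_0^4=x^4=1$, $a_0^2=x^2\neq 1$, $xa_0x^{-1}=a_0^{-1}$ are a presentation of $Q_8$, so $Q\coloneqq\langle a_0,x\rangle$ is a copy of the quaternion group inside $G$ (its eight elements $a_0^i$ and $a_0^i x$ for $0\leq i\leq 3$ are distinct because $a_0$ has order $4$ in $A$ and the $a_0^i x$ lie in the nontrivial coset $xA$).

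The main step is then to split $Q$ off as a direct factor. Since $A[2]$ is Boolean and contains the nontrivial element $x^2$, the subgroup $\langle x^2\rangle$ admits a complement: there is a Boolean subgroup $B\leq A[2]$ with $A[2]=\langle x^2\rangle\oplus B$. I would then verify $G=Q\times B$ as an internal direct product. Since $A$ is abelian, $B$ commutes with $a_0$; and since every $b\in B\subseteq A[2]$ is an involution, $xbx^{-1}=b^{-1}=b$, so $B$ also commutes with $x$. The intersection $Q\cap B$ lies in $\langle a_0\rangle\cap A[2]=\{1,x^2\}$, which meets $B$ trivially by construction. For the remaining inclusion, any $a\in A$ satisfies either $a\in A[2]$ or $(aa_0^{-1})^2=1$, so $A\subseteq\langle a_0\rangle\cdot A[2]\subseteq Q\cdot B$; combined with $x\in Q$ this gives $G=A\cup xA\subseteq Q\cdot B$. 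Hence $G\cong Q_8\times B$ with $B$ Boolean, contradicting the hypothesis.

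The main obstacle I expect is the splitting step at the start of the third paragraph: producing a complement of $\langle x^2\rangle$ inside the Boolean group $A[2]$ and then checking that together with $Q$ it recovers all of $G$ as an internal direct product. Once the decomposition $A[2]=\langle x^2\rangle\oplus B$ is in hand, the remaining verifications are direct consequences of the defining relations of a generalized dicyclic group.
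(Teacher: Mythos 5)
Your proof is correct, and it reaches the same contrapositive statement as the paper (if every $a\in A$ has $a^2\in\{1,x^2\}$ then $G\cong Q_8\times B$) by a genuinely different decomposition. The paper invokes Prüfer's theorem to write $A=(\Z/2\Z)^{(I)}\times(\Z/4\Z)^{(J)}$ outright, then argues that $J$ must be a singleton (two independent $\Z/4\Z$ factors would give two distinct nontrivial squares, both forced to equal $x^2$) and that the $\Z/4\Z$ factor together with $x$ generates a copy of $Q_8$. You instead work with the squaring homomorphism: its image is $\{1,x^2\}$ (nontriviality coming from non-abelianness of $G$), its kernel $A[2]$ is Boolean of index $2$, any $a_0\notin A[2]$ yields $\langle a_0,x\rangle\cong Q_8$, and a complement of $\langle x^2\rangle$ inside the $\mathbf{F}_2$-vector space $A[2]$ supplies the Boolean factor $B$; your verification of the internal direct product $G=Q\cdot B$ (commutation, trivial intersection via $Q\cap A=\langle a_0\rangle$, and $A=\langle a_0\rangle\cdot A[2]$) is complete and correct. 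What your route buys is self-containedness: you only need the existence of a complement of a line in an $\mathbf{F}_2$-vector space rather than the full structure theorem for abelian groups of bounded exponent (both ultimately use choice for infinite $A$, so nothing is lost or gained there). The paper's route is shorter on the page because it outsources the splitting to a named theorem. The step you flagged as the main obstacle is indeed the only place requiring a nontrivial input, and it is standard.
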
 
\begin{proof} Assume that $a^2 \in \{1,x^2\}$ for every $a \in A$. In particular $A$ is an abelian group of exponent at most $4$ and hence by a theorem of Prüfer a direct sum of cyclic groups, that is in our case a direct sum of copies of $\Z/4\Z$ and of $\Z/2\Z$.
That is, $A=(\Z/2\Z)^{(I)}\times(\Z/4\Z)^{(J)}$ for some (possibly empty) sets $I$ and $J$. Firstly, $J$ is nonempty as otherwise $G$ would be abelian and hence not generalized dicyclic. Secondly, the assumption that $a^2 \in \{1,x^2\}$ for every $a \in A$ implies that $J$ has cardinality exactly one, and that the group generated by $\Z/4\Z^{(J)}$ and $x$ is isomorphic to $Q_8$. Therefore, $G$ is isomorphic to $Q_8 \times (\Z/2\Z)^{(I)}$. This proves the lemma.
  \end{proof}
We now proceed to prove Theorem~\ref{Thm:Main2}.
\begin{proof}[Proof of Theorem~\ref{Thm:Main2}]
By \cite[Theorem 7]{LdlS2020}, $\abs{\xi_G}=1$ if and only if $G$ is neither generalized dicyclic, nor abelian with an element of order at least $3$. This takes care of the first and the last assertion.

Now, let $G$ be an abelian group with an element $g_0$ of order at least $3$.
We will prove that $\xi_G = \{id,\eta\}$ where $\eta: g \mapsto g^{-1}$. Let $\varphi$ be any element of $\xi_G$. Replacing $\varphi$ by $\eta \circ \varphi$, we can suppose that $\varphi(g_0)=g_0$. If $h \in G$ is such that $\varphi(g_0h)\neq g_0h$, then
\begin{align*}
\varphi(g_0h)&=\varphi(1\cdot g_0h)=(g_0h)^{-1}=g_0^{-1}h^{-1}\\
			&=\varphi(g_0h)=g_0h^{-1}.
\end{align*}
This implies that $g_0^2=1$, a contradiction. 
Hence $\varphi(g_0h)=g_0h$ for every $h\in G$, that is $\varphi=\id$. This proves that $\xi_G=\{\id,\eta\}$.
The result for $\Xi_G$ follows.

The third assertion is Lemma~\ref{Lemma:Boolean}.

Finally, suppose that $G=\langle A,x\rangle$ is generalized dicyclic, but not isomorphic to $Q_8\times B$ with $B$ Boolean.
We will prove that $\xi_G=\{\id,\psi\}$ where $\psi(a)=a$ and $\psi(ax)=(ax)^{-1}$ for $a\in A$.
By Lemma~\ref{lem:gendicyclic_not_Q8} there exists an element $a_0\in A$ such that $a_0^2\notin\{1,x^2\}$. Let $\varphi$ be an element of $\xi_G$.
Since $\varphi(x)$ is in $\{x,x^{-1}\}$, composing by $\psi$ if necessary, we can suppose that $\varphi(x)=x$.
Moreover, since $\varphi_{|_A}$ belongs to $\xi_A$, we know from the second item that $\varphi_{|_A}$ is in $\{\id,(\cdot)^{-1}\}$.
Suppose that $\varphi_{|_A}=(\cdot)^{-1}$.
But then 
\begin{align*}
a_0&=\varphi(a_0^{-1})=\varphi(xa_0x^{-1})\in\{\varphi(x)a_0x^{-1},\varphi(x)xa_0^{-1}\}\\
&\in\{xa_0x^{-1}=a_0^{-1},x^2a_0^{-1}\}.
\end{align*}
Hence either $a_0=a_0^{-1}$ or $a_0^2=x^2$, which is absurd.
We have proved that for all $\varphi\in\xi_G$ such that $\varphi(x)=x$ we have $\varphi_{|_A}=\id$.
Now we look at $\tilde\varphi(g)\coloneqq x^{-1}\varphi(xg)$.
Since $\varphi(x)=x$ and $x^2$ is in $A$, the map $\tilde\varphi$ is in $\xi_G$ and $\tilde\varphi(x)=x^{-1}\varphi(x^2)=x$.
Hence $\tilde\varphi(a)=a$ for every $a$ in $A$, which implies that $\varphi(xa)=xa$ for every $a\in A$, that is $\varphi=\id$.
\end{proof}
If $G$ is a group and $S$ a generating set, we denote by $\Xi_S=\Xi_{G,S}$
the group $\Aut(\colCayley{G}{S})$ and by $\xi_S=\xi_{G,S}$ the stabilizer of $1_G$ in $\Aut(\colCayley{G}{S})$.
Once again we have $\Xi_S=G\cdot \xi_S$ with $G\cap \xi_S=\{\id\}$ and the group $\xi_S$ admits the following characterization:
\[
	\xi_S=\setst{\varphi\in\Sym(G)}{\varphi(1)=1\textnormal{ and }\varphi(gs)\in\varphi(g)\{s,s^{-1}\}\forall g\in G,\forall s\in S}.
\]
It follows that if $S\subset T$ are two generating sets of $G$, then $\xi_T\leq \xi_S$. Our next lemma uses a compactness argument to show that for a finitely generated $G$, the value $\inf\setst{[\Aut(\colCayley{G}{S}:G]}{S\textnormal{ generates } G}=\abs{\xi_G}$ is attained on a finite $S$. In section~\ref{sec:quantitative_results}, we will prove this directly, in order to obtain explicit bounds on $S$.
\begin{lemma}\label{Lemma:Compacity}
Let $G$ be a finitely generated group. Then there exists a finite generating set $S$ with $\xi_S=\xi_G$.
\end{lemma}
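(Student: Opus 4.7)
The plan is to start with an arbitrary finite generating set $S_0\subseteq G$ and enumerate the (countable) group as $G=\{g_0=1,g_1,g_2,\ldots\}$; then $S_n=S_0\cup\{g_1,\ldots,g_n\}$ gives a sequence of finite generating sets whose union is $G$, the chain $\xi_{S_0}\supseteq\xi_{S_1}\supseteq\cdots$ is decreasing, and $\bigcap_n\xi_{S_n}=\xi_G$. It therefore suffices to show this chain stabilizes at some finite stage, and then set $S=S_N$.

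The compactness input is that each $\xi_{S_0}$ acts on $G$ with finite orbits: writing $g=s_1\cdots s_k$ with $s_i\in S_0^{\pm}$, any $\varphi\in\xi_{S_0}$ fixing $1$ sends $g$ into the finite set $\{s_1^{\epsilon_1}\cdots s_k^{\epsilon_k}:\epsilon_i\in\{\pm1\}\}$. Hence $\xi_{S_0}$ embeds as a closed subgroup of the profinite group $\prod_{g\in G}\Sym(\xi_{S_0}\cdot g)$ in the pointwise convergence topology, and each $\xi_{S_n}$ is a closed (hence compact) subgroup. A descending chain of closed subgroups in a compact group need not stabilize in general, but it must when the ambient group is itself finite, so my strategy is to arrange $S_0$ so that $\xi_{S_0}$ is finite.

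This reduction is done according to Theorem~\ref{Thm:Main2}. If $G$ is neither abelian nor generalized dicyclic, then $\xi_G=\{\id\}$ and the main results of~\cite{LdlS2020,LdlS2021} yield a finite generating set $S$ with $\Aut(\unCayley{G}{S})=G$; this forces $\xi_S=\{\id\}=\xi_G$ and the lemma follows immediately. Otherwise $G$ is abelian or generalized dicyclic, and the claim is that $\xi_{S_0}$ is finite for \emph{every} finite generating set $S_0$: using either the commutativity $\varphi(g+s+t)=\varphi(g+t+s)$ for abelian $G$, or the relation $xax^{-1}=a^{-1}$ applied separately on $A$ and on $xA$ for a generalized dicyclic $G=\langle A,x\rangle$, one shows that each $\varphi\in\xi_{S_0}$ is already determined by the $2^{|S_0|}$ choices of $\varphi(s)\in\{s,s^{-1}\}$ for $s\in S_0$.

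The main obstacle is this last structural verification in the abelian and generalized dicyclic cases: the colour-preserving condition constrains $\varphi$ only locally along each edge, so one has to globalize the local sign choices, which requires a careful bookkeeping of the relations in $G$. Once the finiteness of $\xi_{S_0}$ is in hand, the descending chain $(\xi_{S_n})$ inside the finite group $\xi_{S_0}$ stabilizes and equals its intersection $\xi_G$ from some $N$ onwards, furnishing $S=S_N$.
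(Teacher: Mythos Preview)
Your plan and the paper's proof share the closing move---a descending chain of finite subgroups stabilizes at its intersection $\xi_G$---but they diverge on how to reach a finite $\xi_T$. The paper does this uniformly, without splitting by group type: from Theorem~\ref{Thm:Main2} it knows only that $\xi_G$ is finite, picks a finite $F$ on which $\xi_G$ acts faithfully, and then uses compactness to find a finite $T\supseteq S_0$ such that every $\varphi\in\xi_T$ with $\varphi|_F=\id$ also satisfies $\varphi|_{S_0F}=\id$. A short bootstrapping (replacing $\varphi$ by $g\mapsto s^{-1}\varphi(sg)$ for $s\in S_0$) then gives $\varphi|_{S_0^nF}=\id$ for all $n$, hence $\varphi=\id$; so $\xi_T$ is finite, and the chain $(\xi_{T^n})$ stabilizes. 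The compactness ingredient here is exactly your observation that $\xi_{S_0}$ has finite orbits and is therefore profinite---but the paper exploits it to extract a limit point in $\bigcap_T\xi_T=\xi_G$, not to argue that $\xi_{S_0}$ itself is finite.

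Your route instead splits by group type. For $G$ neither abelian nor generalized dicyclic you invoke the full GRR results of \cite{LdlS2020,LdlS2021}; this is legitimate (those are external results) though heavier than what the paper uses here, which is only the finiteness of $\xi_G$. For $G$ abelian or generalized dicyclic you assert that $\varphi\in\xi_{S_0}$ is determined by $\varphi|_{S_0}$, giving $|\xi_{S_0}|\le 2^{|S_0|}$. This claim is almost certainly true, but as you yourself flag, the verification is not a one-liner: the local sign $\varphi(g+s)=\varphi(g)\pm s$ is not immediately forced to be constant in $g$, and one needs to propagate consistency using commutativity (or the relation $xax^{-1}=a^{-1}$) together with bijectivity. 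For instance, in $G=\Z\times\Z/2\Z$ with $S_0=\{(1,0),(1,1)\}$, the intersection $\{s+t,s-t\}\cap\{t+s,t-s\}$ has two elements, and ruling out the spurious one requires looking further out in the Cayley graph. So your proposal is a correct outline with a genuine remaining obstacle; the work needed to close it is comparable to (and in fact largely subsumed by) the case analysis in Theorem~\ref{Thm:Quantitative}. The paper's compactness argument sidesteps this entirely.
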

\begin{proof}
By Theorem~\ref{Thm:Main2} we know that $\xi_G$ has cardinality $1$, $2$ or $8$. In particular, it is finite.
Therefore there exists a finite subset $F\subset G$ such that for all $\varphi\in\xi_G$ if $\varphi_{|_F}=\id$ then $\varphi=\id$.

Let $S_0$ be a finite generating set of $G$. We claim that there exists $S_0\subset T$ finite such that for all $\varphi\in\xi_T$ if $\varphi_{|_F}=\id$ then $\varphi_{|_{S_0F}}=\id$.
Indeed, if it wasn't the case, we would have a $\varphi$ in $\bigcap_{S\textnormal{ finite}}\xi_S=\xi_G$ such that $\varphi_{|_F}=\id$ but $\varphi_{|_{S_0F}}\neq\id$, which is absurd.

Let $\varphi\in\xi_T$ such that $\varphi_{|_F}=\id$. Let $s$ be an element of $S_0$. Define $\varphi_1(g)\coloneqq s^{-1}\varphi(sg)$.
Then $\varphi_1$ belongs to $\xi_T$ and ${\varphi_1}_{|_F}=\id$, which implies that ${\varphi_1}_{|_{S_0F}}=\id$ and hence $\varphi_{|_{sS_0F}}=\id$.
Since this is true for all $s\in S_0$ we obtain that $\varphi_{|_{S_0^2F}}=\id$.
By induction $\varphi_{|_{S_0^nF}}=\id$ for all $n$ and hence $\varphi=\id$.
We have just proved that for all $\varphi\in\xi_T$ if $\varphi_{|_F}=\id$ then $\varphi=\id$.
Since $F$ is finite, this means that $\xi_T$ is finite.

To finish the proof, observe that $(\xi_{T^n})_n$ is a decreasing sequence of finite subgroups of $\xi_{T}$ whose intersection is equal to $\xi_G$. In particular, for $n$ big enough we have $\xi_{T^n}=\xi_G$.
\end{proof}
We finally proceed to prove Theorem~\ref{Thm:Main}:
\begin{proof}[Proof of  Theorem~\ref{Thm:Main}]
On the one hand, by Theorem~\ref{Thm:Main2} and Lemma~\ref{Lemma:Compacity}, there exists a finite generating set $S$ such that $[\Aut(\colCayley{G}{S}:G]$ is equal to $2$ if $G$ is abelian or generalized dicyclic and $1$ otherwise.
On the other hand, by \cite[Theorem 10]{LdlS2020} and \cite[Proposition 2.2]{LdlS2021}, there exists a finite $T$ containing $S$ such that $[\Aut(\unCayley{G}{T}:\Aut(\colCayley{G}{T}]=1$.
\end{proof}
\section{Quantitative bounds}\label{sec:quantitative_results}

The aim of this section is to prove a quantitative form of Lemma~\ref{Lemma:Compacity}, which remains true for non-finitely generated groups.
The argument is an adaptation of the proof of Theorem~\ref{Thm:Main2}. 
\begin{lemma}\label{Lemma:Trivial}
Let $S\subset T$ be two symmetric generating sets of $G$ and let $S_0$ be a subset of~$G$.
Suppose that for all $\varphi$ in $\xi_{T}$, if $\varphi_{|_{S_0}}=\id$ then $\varphi_{|_{S\cup SS_0}}=\id$.
Then for all $\varphi$ in $\xi_{T}$, if $\varphi_{|_{S_0}}=\id$ then $\varphi=\id$.
\end{lemma}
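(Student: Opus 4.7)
The plan is to argue by induction on $n \geq 1$ that every $\varphi \in \xi_T$ satisfying $\varphi|_{S_0} = \id$ in fact satisfies $\varphi|_{S^n \cup S^n S_0} = \id$. Since $S$ is a symmetric generating set, $G = \bigcup_n S^n$, so this will yield $\varphi = \id$, as required.

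The base case $n=1$ is exactly the hypothesis of the lemma. For the inductive step, I rely on the standard observation, already used in the proof of Lemma~\ref{Lemma:Compacity}, that for any $t \in G$ with $\varphi(t) = t$, the conjugate
\[ \varphi_t(g) \coloneqq t^{-1}\varphi(tg) \]
again belongs to $\xi_T$: it fixes $1_G$, and the colour-preserving condition $\varphi(gs) \in \varphi(g)\{s,s^{-1}\}$ transports from $\varphi$ to $\varphi_t$ by direct substitution. Assuming the inductive hypothesis, I would fix $t \in S^n$; then $\varphi(t) = t$ guarantees $\varphi_t \in \xi_T$, and for each $\sigma \in S_0$ the element $t\sigma$ lies in $S^n S_0$, so $\varphi(t\sigma) = t\sigma$ and hence $\varphi_t(\sigma) = \sigma$. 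Thus $\varphi_t|_{S_0} = \id$, and applying the hypothesis of the lemma to $\varphi_t$ gives $\varphi_t|_{S \cup S S_0} = \id$, i.e.\ $\varphi|_{t(S \cup S S_0)} = \id$. Taking the union over $t \in S^n$ yields $\varphi|_{S^{n+1} \cup S^{n+1} S_0} = \id$, closing the induction.

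There is no genuine obstacle here; the argument is essentially the final inductive step in the proof of Lemma~\ref{Lemma:Compacity}, stripped of its finiteness assumption. The only subtlety is the choice of induction invariant: one must propagate $S^n S_0$ together with $S^n$, because the conjugation trick at a point $t \in S^n$ needs both $\varphi(t) = t$ (to ensure $\varphi_t \in \xi_T$) and $\varphi|_{tS_0} = \id$ (to feed $\varphi_t|_{S_0} = \id$ back into the hypothesis).
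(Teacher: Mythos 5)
Your argument is correct and is essentially the paper's own proof: the paper likewise applies the hypothesis to the translate $\tilde\varphi_s(g)=s^{-1}\varphi(sg)$ for $s$ already known to be fixed, and inducts to show $\varphi$ is the identity on $S^{\le n}\cup S^{\le n}S_0$ for all $n$. Your write-up just makes the induction invariant (propagating $S^nS_0$ alongside $S^n$) more explicit.
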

\begin{proof}
Let $\varphi$ be in $\xi_{T}$ such that $\varphi_{|_{S_0}}=\id$ and let $s$ be an element of $S$.
We define $\tilde\varphi_s(g)\coloneqq s^{-1}\varphi(sg)$. One verifies that $\tilde\varphi_s(g)$ is indeed in $\xi_{T}$ and 
$\tilde\varphi_{|_{S_0}}=\id$.
For $t\in S \cup S S_0$ we have $\varphi(st)=s\tilde\varphi_s(t)=s\cdot t$, that is $\varphi$ is the identity on $S^{\leq2} \cup S^{\leq 2} S_0$. By induction we obtain that $\varphi$ is the identity.
\end{proof}
\begin{theorem}\label{Thm:Quantitative}
Let $G$ be a group and $S$ a symmetric generating set.
\begin{enumerate}
\item If $G$ is Boolean, then $\Xi_S=\Xi_G=G$,
\item If $G$ is any other abelian group, then $\Xi_{S^{\leq 2}}=\Xi_G=G\rtimes\{\id,\eta\}$,
\item If $G=Q_8\times B$ with $B$ Boolean, then $\Xi_{S^{\leq 3}}=\Xi_G=\Xi_{Q_8}\times B$,
\item If $G$ is any other generalized dicyclic group then $\Xi_{S^{\leq 3}}=\Xi_G=G\rtimes\{\id,\psi\}$,
\item If $G$ is neither abelian nor generalized dicyclic, then $\Xi_{S^{\leq 3}}=\Xi_G=G$.
\end{enumerate}
\end{theorem}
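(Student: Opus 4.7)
The plan is to verify, in each case, the hypothesis of Lemma~\ref{Lemma:Trivial} for an appropriate small test set $S_0$ with $T\in\{S,\,S^{\leq 2},\,S^{\leq 3}\}$, after reducing $\varphi\in\xi_T$ modulo the known structure of $\xi_G$ from Theorem~\ref{Thm:Main2}.

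Case~(1) is immediate: every $s\in S$ equals its inverse, so color-preservation collapses to $\varphi(gs)=\varphi(g)s$, and $\varphi(1)=1$ forces $\varphi=\id$. For case~(2), choose $g_0\in S$ with $g_0^2\neq 1$; composing with $\eta$ if needed we may assume $\varphi(g_0)=g_0$. The key observation is that for any $s\in S$ the element $g_0 s^{-1}$ lies in $S^{\leq 2}$, so the edge from $s$ to $g_0$ in $\colCayley{G}{S^{\leq 2}}$ has color $\{g_0 s^{-1},\, s g_0^{-1}\}$. The corresponding color constraint combined with $\varphi(s)\in\{s,s^{-1}\}$ (from the direct edge $1\to s$) yields $\varphi(s)\in\{s,\,g_0^2 s^{-1}\}\cap\{s,s^{-1}\}=\{s\}$ because $g_0^2\neq 1$. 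A similar two-path computation shows $\varphi(s g_0)=s g_0$ for every $s\in S$, and Lemma~\ref{Lemma:Trivial} with $S_0=\{g_0\}$ and $T=S^{\leq 2}$ concludes $\varphi=\id$.

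Cases~(3) and (4) adapt the same idea to the generalized dicyclic setting. Let $G=\langle A,x\rangle$; since $S$ generates $G$ there exists $y\in S\cap xA$, and composing with $\psi$ we may assume $\varphi(y)=y$. In the non-$Q_8\times B$ case, an element $a_0\in A$ with $a_0^2\notin\{1,x^2\}$ can be chosen in $A\cap S^{\leq 2}$ because $A$ is generated by $A\cap S^{\leq 2}$. The edge from $a_0$ to $y$ has color $a_0^{-1}y\in S^{\leq 3}$, and using the identity $y^2=x^2$ valid for any $y\in xA$, this color constraint together with $\varphi(a_0)\in\{a_0,a_0^{-1}\}$ forces $\varphi(a_0)=a_0$. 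One then mimics Theorem~\ref{Thm:Main2} to propagate $\varphi|_A=\id$ via the case~(2) argument inside $A$, and via $\tilde\varphi(g)=y^{-1}\varphi(yg)$ to extend $\varphi=\id$ to $yA$, each step using only colors in $S^{\leq 3}$. Case~(3) then follows by combining this analysis with Lemma~\ref{Lemma:Boolean}, reducing to $G=Q_8$ where $S^{\leq 3}$ certainly covers the whole group.

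For case~(5), the desired conclusion $\xi_{S^{\leq 3}}=\{\id\}$ is a quantitative form of \cite[Theorem~7]{LdlS2020}. This is the main obstacle: the latter's proof ranges over all of $G$, whereas we need every witness configuration to live within $S^{\leq 3}$ of the identity. The plan is to extract from that proof a pair $(s,t)\in S\times S$ exhibiting the failure of both abelianness and the generalized dicyclic relation, then to analyze the constraints imposed on $\varphi$ at the vertices $\{1,s,t,st,ts,sts,\ldots\}\subset S^{\leq 3}$ until they force $\varphi|_{\{s,t\}}=\id$, and finally to invoke Lemma~\ref{Lemma:Trivial} with $S_0=\{s,t\}$.
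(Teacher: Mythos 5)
Your overall strategy --- reduce $\varphi$ modulo the known description of $\xi_G$ from Theorem~\ref{Thm:Main2} and then verify the hypothesis of Lemma~\ref{Lemma:Trivial} on a small test set $S_0$ --- is exactly the paper's, and your cases (1) and (2) are correct and essentially identical to the paper's argument (including the choice $S_0=\{g_0\}$, $T=S^{\leq 2}$). The problems are in the remaining cases, where the sketch skips precisely the steps that make the theorem delicate.

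In case (4), the sentence ``propagate $\varphi|_A=\id$ via the case~(2) argument inside $A$ \dots each step using only colors in $S^{\leq 3}$'' hides a genuine gap. The generating set of $A$ you are forced to use is $S_0\cup S_1$ with $S_0=A\cap S$ and $S_1=(x^{-1}S\cup xS)\cap A\subset S^{\leq 2}$, and the case-(2) propagation consumes the colors $a^{-1}b$ for $a,b\in S_0\cup S_1$; when $a=xs$ and $b=x^{-1}t$ one gets $a^{-1}b=s^{-1}x^2t$, which in general lies only in $S^{\leq 4}$. The paper must work around this with a dedicated trick: replace $a^{-1}b$ by $a^{-1}bx^2=s^{-1}t\in S^{\leq 2}$ and use that $x^2$ is a central element of order $2$, so that $\varphi(gx^2)=\varphi(g)x^2$. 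Moreover you only pin down $\varphi(a_0)$ for the single element $a_0$ with $a_0^2\notin\{1,x^2\}$; for a generator $a$ with $a^2=x^2$ your two constraints $\varphi(a)\in\{a,a^{-1}\}$ and $\varphi(a)\in\{a,ax^2\}$ coincide (since $a^{-1}=ax^2$ exactly when $a^2=x^2$) and do not force $\varphi(a)=a$, so a separate comparison of $a$ against $a_0$ via the color $a_0^{-1}a$ is needed --- again a color that may escape $S^{\leq 3}$ without the $x^2$ device. In case (3), Lemma~\ref{Lemma:Boolean} concerns $\xi_G$ for the full generating set, so it does not by itself ``reduce to $G=Q_8$'' for an arbitrary $S$: the paper instead extracts two non-commuting elements $s,t\in S$ of order $4$, checks that $\langle s,t\rangle\times B\to G$ is an isomorphism, and verifies the hypothesis of Lemma~\ref{Lemma:Trivial} for $S_0=\{s,t,st\}$ by a direct computation with $\varphi(x_1b)=\varphi(x_1)b$ for $b$ of order $2$. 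Finally, for case (5) you offer only a plan and call it the main obstacle; in fact no new work is required, since \cite[Theorem 7]{LdlS2020} is already stated in the quantitative form needed, and the paper disposes of this case by that citation alone.
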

\begin{proof}
The first assertion is obvious.

Now, let $G$ be an abelian group with an element $g_0$ of order at least $3$. We may choose such an element in $S$.
We will prove that $\xi_G = \{id,\eta\}$ where $\eta: g \mapsto g^{-1}$. Let $\varphi$ be any element of $\xi_{T}$ for $T = S^{\leq 2}$.
Suppose that $\varphi(g_0)=g_0$.
If $s \in T$ is such that $sg_0  \in T$ and $\varphi(sg_0)\neq sg_0$, then
\begin{align*}
\varphi(sg_0)&=\varphi(1\cdot sg_0)=(sg_0)^{-1}=s^{-1}g_0^{-1}\\
			&=\varphi(g_0\cdot s)\in\{g_0s,g_0s^{-1}\}.
\end{align*}
This implies that $g_0^2=1$, a contradiction. 
Hence for every $s\in S$ we have $\varphi(sg_0)=sg_0$  and also $\varphi(s)=s$ by taking $t=sg_0^{-1}$.
By Lemma \ref{Lemma:Trivial}, if $\varphi(g_0)=g_0$ then $\varphi=\id$.
Finally, if $\varphi(g_0)\neq g_0$, then $\eta\circ\varphi$ is the identity, that is $\varphi=\eta$.

For the third assertion, since $S$ generates $Q_8\times B$, it has two elements of order $4$, say $s$ and $t$, that do not commute and hence generate a subgroup isomorphic to $Q_8$. Moreover, the product map $\langle s,t\rangle \times B \to G$ is easily seen to be  an isomorphism. So in other words, we can assume that $s$ and $t$ generate the first factor in the decomposition $G=Q_8 \times B$.

We already know by Lemma~\ref{Lemma:Boolean} that $\xi_G=\xi_{Q_8}$ and that $\Xi_G=\Xi_{Q_8}\times B$. We therefore have to prove that the only element $\varphi \in \xi_{S^{\leq 3}}$ that is the identity on $Q_8$ is the identity on $G$. To that purpose, we apply Lemma~\ref{Lemma:Trivial} for $S=S$, $T=S^{\leq 3}$ and $S_0 = \{s,t,st\}$. Let $\varphi \in \xi_{S^{\leq 3}}$ be such that $\varphi_{|_{S_0}}=\id$. Let $x \in S$ be arbitrary, write $x=x_1b$ with $x_1 \in Q_8$ and $b \in B$. Observe that $x_1$ either belongs to $\{s,t,st\}^{\pm}$, or is of order $2$. In the first case, using that $\varphi(x_1)= x_1$ by assumption and that $b \in S^{\leq 3}$ has order $2$, we have $\varphi(x) = \varphi(x_1) b=x$. In the second case $x \in S$ has order $2$ and therefore $\varphi(x)=x$. Similarly, if $z \in S_0$, we have $\varphi(xz) = \varphi(x_1zb)= x_1zb=xz$. We have proved that for every $\varphi \in \xi_{S^{\leq 3}}$, $\varphi_{|_{S_0}}=\id$ implies $\varphi_{|_{S \cup SS_0}}=\id$. Lemma~\ref{Lemma:Trivial} allows us to conclude.

For the fourth assertion, let $A<G$ be the index $2$ abelian group as in the definition of generalized dicyclic groups. We assume that $S$ is symmetric. Since $S$ is generating, there is an element $x \in S \setminus A$. It is necessarily of order $4$ and $G = \langle A,x\rangle$.

Let us define two symmetric subsets of $A$: $S_0 = A \cap S$ and $S_1 = (x^{-1}S \cup xS) \cap A$. The assumption that $S$ generates $G$ implies that $S_0 \cup S_1$ generates $A$. Moreover, $T\coloneqq \{x,x^{-1}\}\cup S_0 \cup S_1$ generates $G$ and is contained in $S^{\leq2}$.
 
Consider $\varphi \in \xi_{S^{\leq 2}}$. Let $a$ be an element of $T$ of order $2$. Then $\varphi(a)\in\{a,a^{-1}\}=\{a\}$ and for every $g\in G$ we also have $\varphi(ag)=\varphi(ga)=\varphi(g)a=a\varphi(g)$.
 
Consider $\varphi \in \xi_{S^{\leq 3}}$ such that $\varphi(x)=x$.
Observe that for every $a,b \in S_0 \cup S_1$,
  \[ a \in S^{\leq 3}\textrm{ and }xa \in S^{\leq 3} \textrm{ and } \left(a^{-1}b \in S^{\leq 3}\textrm{ or } a^{-1}b x^2 \in S^{\leq 3}\right).\]
  The only case when $a^{-1}b$ might not belong to $S^{\leq 3}$ is when $a=xs$ and $b = x^{-1} t$ (or the converse), and in that case $a^{-1} b x^2 =s^{-1}t \in S^{\leq 2}$. Moreover, in that case, using that $x^2$ has order $2$ and belongs to $S^{\leq 3}$, we have
  \[ \varphi(g a^{-1} b) = \varphi(g a^{-1}b x^2 x^2) = \varphi( g a^{-1} b x^2) x^2 \in \varphi(g) \{a^{-1} bx^2, x^{-2}b^{-1} a\} x^2,\]
  or simply
  \[ \varphi(ga^{-1}b) \in \varphi(g) \{a^{-1}b,b^{-1}a\}.\]
What we retain from this discussion if that, for every $g \in G$ and every $a,b \in S_0 \cup S_1$,
\begin{equation}\label{eq:S3}  \varphi(gh) \in \varphi(g)\{h,h^{-1}\}\textrm{ when }h=a,xa\textrm{ or }a^{-1}b.
\end{equation}

Let $a \in S_0 \cup S_1$. We shall prove, by a case-by-case analysis, that $\varphi(a)=a$ and $\varphi(ax)=ax$.

  If $a^2=1$, since $\xi_{S^{\leq 3}}$ is a subgroup of $\xi_{S^{\leq 2}}$, we already know that $\varphi(a)=a$ and $\varphi(ax)=a\varphi(x)=ax$.

  Consider now the case when $a^2 \notin \{1,x^2\}$. Assume first, for a contradiction, that $\varphi(a) \neq a$. Then using \eqref{eq:S3} we obtain
\[a^{-1} = \varphi(a) = \varphi(x a^{-1} x^{-1}) = x (a^{-1} x^{-1})^{-1} = x^2 a^{-1},\]
which contradicts the assumption that $a^2 \neq x^2$. Assume now, for a contradiction, that $\varphi(ax) \neq ax$. Then using \eqref{eq:S3} we obtain
\[ x^{-1} a^{-1} = \varphi(ax)=\varphi(x a^{-1}) = xa,\]
which again contradicts the assumption that $a^2 \neq x^{2}$. So we have $\varphi(a) = a$ and $\varphi(ax)=ax$ in that case too.

Before we consider the last case, we observe that the second case necessarily happens by Lemma~\ref{lem:gendicyclic_not_Q8}. In particular, there is an element $a_0 \in S_0 \cup S_1$ such that $a_0^2 \notin \{1,x^2\}$. We just proved that $\varphi(a_0)= a_0$ and $\varphi(a_0x)=a_0 x$.

Finally, consider now the case when $a^2=x^2$. Assume for a contradiction that $\varphi(a) \neq a$. Then using \eqref{eq:S3} we obtain
\[ a^{-1} = \varphi(a) = \varphi(a_0 a_0^{-1}a) = a_0 (a_0^{-1}a)^{-1} = a_0^2 a^{-1},\]
a contradiction. So $\varphi(a) = a$. Assume now that $\varphi(ax) \neq ax$.  Then using \eqref{eq:S3} we obtain
\[ x^{-1}a^{-1} = \varphi(ax) = \varphi(a_0 x a^{-1} a_0) = a_0 x (a^{-1}a_0)^{-1} = x a_0^{-2}  a,\]
which implies $a_0^2 = a^2 x^2 = 1$, a contradiction.

So we have indeed proven that, for every $\varphi \in \xi_{S^{\leq 3}}$, $\varphi(x)=x$ implies that $\varphi(a)=a$ and $\varphi(ax)=ax$ for every $a \in S_0 \cup S_1$. By Lemma~\ref{Lemma:Trivial}, we can deduce from this that $\varphi(x)=x$ implies $\varphi = \id$. This concludes the proof of the fourth assertion.

Finally, the last assertion is a direct consequence of \cite[Theorem 7]{LdlS2020}.
\end{proof} 
We conclude this note by a discussion on the optimality of the bounds of Theorem~\ref{Thm:Quantitative}.
Namely, we will show that $S^{\leq2}$ cannot be replaced by $S$ in the second assertion of Theorem~\ref{Thm:Quantitative}, while $S^{\leq 3}$ cannot be replaced by $S^{\leq2}$ in the third, fourth and the fifth assertions.

\begin{example}\label{Example:Product}
For $i\in\{1,2\}$, let  $G_i=\langle S_i\rangle$ be a group with a given generating set not containing the identity and let $G\coloneqq G_1\times G_2$ be their product with generating set $S\coloneqq(S_1\times\{1\})\cup(\{1\}\times S_2)$.
Then for any $\varphi_i\in\xi_{S_i}$, the map $(\varphi_1\times\varphi_2)(g_1,g_2)\coloneqq(\varphi_1(g_1),\varphi_2(g_2))$ is in $\xi_S$.
In particular, if $G_1$ and $G_2$ are two abelian groups, then $G$ is abelian and $\xi_S$ contains $\{\id,\eta,\eta_1\times\id,\id\times\eta_2\}$ where $\eta_i$ is the inverse map on $G_i$.
If moreover both $G_1$ and $G_2$ have an element of order at least $3$, then the elements of $\{\id,\eta,\eta_1\times\id,\id\times\eta_2\}$ are pairwise distinct and $\abs{\xi_S}\geq 4$, which gives us an infinite family of groups showing the optimality of the bound $S^{\leq 2}$ in the second assertion of Theorem~\ref{Thm:Quantitative}.
\end{example}

\begin{example}\label{Ex:2}
Write $\Z/2\Z=\{0,1\}$ in additive notation and let $G_1=Q_8 \times \Z/2\Z$ and $S_1 = \{(\pm i,1),(\pm j,1),(\pm ij,1)\}$ and let $\varphi_0,\varphi_1$ be two disctinct elements of $\xi_{Q_8}$; for example $\varphi_0(s)=s$ and $\varphi_1(s) = s^{-1}$. By Theorem~\ref{Thm:Main2}, the map
\[ \varphi(x,\varepsilon) = (\varphi_\varepsilon(x),\varepsilon)\]
does not belong to $\xi_{G_1}$, but we claim that that it belongs to $\xi_{S_1^{\leq 2}}$. Indeed, we have $S_1^{\leq 2} = S \cup Q_8 \times \{0\}$. The fact that $\varphi(xy) \in \{\varphi(x) y, \varphi(x)y^{-1}\}$ when $y \in Q_8 \times\{0\}$ is the assumption that $\varphi_0$ and $\varphi_1$ belong to $\xi_{Q_8}$. When $y=(t,1) \in S_1$, and $x=(s,\varepsilon)$, we have $t^{-1} = -t$ and so
\begin{align*}\varphi(xy) = (\varphi_{1+\varepsilon}(st),1+\varepsilon) &\in \{(st,1+\varepsilon),(-st,1+\varepsilon)\}\\
  & = \{(\varphi_\varepsilon(s) t,1+\varepsilon),(\varphi_\varepsilon(s) t^{-1},1+\varepsilon) \} \\
  &=\{ \varphi(x)y,\varphi(x)y^{-1}\}. 
\end{align*}
This proves the claim and illustrates the optimality of the bound the bound $S^{\leq 3}$ in the third assertion of Theorem~\ref{Thm:Quantitative}.

  Observe that this single example can be turned into an infinite family, by taking $G_n = Q_8 \times (\Z/2\Z)^n = G_1 \times  (\Z/2\Z)^{n-1}$, $S_n=S_1 \cup \{(1,0)\} \times (\Z/2\Z)^{n-1}$. In that case, the map $\varphi_n(x,\varepsilon,z) = (\varphi(x,\varepsilon),z)$ belongs to $\xi_{S_n^{\leq 2}}$ but not to $\xi_{G_n}$.
In fact, the same argument works for $Q_8\times B$ for any non-trivial boolean group~$B$.
\end{example}
\begin{example}\label{Ex:3}
The following example is taken from \cite{LdlS2020}, where detailed proofs of its properties can be found at the end of Section 3.
For every $n\geq 2$ let
\[
	H_n = \presentation{s_1,\dots,s_n}{\forall i\neq j: s_is_js_i^{-1}=s_j^{-1}}
\]
with generating set $S_n = \{s_1,\dots,s_n\}$ and let $\varepsilon=s_1^2$.
The group $H_n$ has order $2^{n+1}$ and hence the $H_i$ are pairwise distinct.

While $H_2=Q_8$ and $H_3$ are generalized dicyclic (for $H_3$, the index $2$ abelian subgroup is $\langle s_1s_2,s_3,\varepsilon\rangle$), the group $H_{n}$ is never generalized dicyclic for $n\geq 4$.
Nevertheless, for each of these groups, the map $\eta\colon g\to g^{-1}$ is in $\xi_{S_n^{\leq 2}}$.
We hence have an infinite family of groups showing that it is not possible to replace $S^{\leq 3}$ by $S^{\leq 2}$ in the fifth assertion of Theorem~\ref{Thm:Quantitative}.

Finally, the group $H_3$ is not of the form $Q_8\times B$ (as for example it has too many elements of order $4$) and neither is $K_n\coloneqq H_3\times(\Z/2\Z)^n$.
Take $T_n\coloneqq (S_3\times \{1\})\cup(\{1\}\times (\Z/2\Z)^n)$ for a generating set.
We have $T_n^{\leq 2}=(S_3^{\leq2}\times \{1\})\cup(\{1\}\times (\Z/2\Z)^n)\cup(S_3\times (\Z/2\Z)^n)$.
We claim that $\eta$ belongs to $\xi_{T_n}$.
This results follows from an adaptation of Lemma~\ref{Lemma:Boolean}, but here is a detailed proof of it.
Let $(g,h)$ be in $K_n$ and $(x,y)$ be in $T_n$.
Then $\eta((g,h)(x,y))=(x^{-1}g^{-1},hy)$ while $\eta(g,h)\cdot(x,y)^{\pm1}=(g^{-1}x^{\pm1},hy)$.
The desired equality directly follows from the fact that $\eta$ belongs to $\xi_{S_3^{\leq 2},H_3}$.
We hence have exhibited an infinite family of groups showing the optimality of the bound $S^{\leq 3}$ in the fourth assertion of Theorem~\ref{Thm:Quantitative}.

Observe that for $\alpha$ an arbitrary infinite cardinal, one can adapt the above examples to obtain groups $H_\alpha$ and $K_\alpha$ of cardinality $\alpha$ with the desired properties.
\end{example}
%
%
%
%
%
%
%
%
%
%
%
%
\providecommand{\noopsort}[1]{} \def\cprime{$'$}


\begin{thebibliography}{1}

\bibitem{MR3027684}
David~P. Byrne, Matthew~J. Donner, and Thomas~Q. Sibley.
\newblock Groups of graphs of groups.
\newblock {\em Beitr. Algebra Geom.}, 54(1):323--332, 2013.

\bibitem{MR642043}
Christopher~D. Godsil.
\newblock G{RR}s for nonsolvable groups.
\newblock In {\em Algebraic methods in graph theory, {V}ol. {I}, {II}
  ({S}zeged, 1978)}, volume~25 of {\em Colloq. Math. Soc. János Bolyai}, pages
  221--239. North-Holland, Amsterdam-New York, 1981.

\bibitem{MR0457275}
Wilfried Imrich and Mark~E. Watkins.
\newblock On automorphism groups of {C}ayley graphs.
\newblock {\em Period. Math. Hungar.}, 7(3-4):243--258, 1976.

\bibitem{LdlS2020}
Paul-Henry Leemann and Mikael {de la Salle}.
\newblock {C}ayley graphs with few automorphisms.
\newblock {\em J. Algebraic Combin.}, 2020.

\bibitem{LdlS2021}
Paul-Henry {Leemann} and Mikael {de la Salle}.
\newblock {Cayley graphs with few automorphisms: the case of infinite groups}.
\newblock {\em Annales Henri Lebesgue}, to appear, 2021.

\bibitem{MR3995536}
Joy Morris and Josh Tymburski.
\newblock Most rigid representations and {C}ayley index.
\newblock {\em Art Discrete Appl. Math.}, 1(1):Paper No. 1.05, 12, 2018.

\end{thebibliography}
\end{document}